\newtheorem{theorem}{Theorem}[section]
\newtheorem{lemma}[theorem]{Lemma}
\newtheorem{definition}[theorem]{Definition}
\newtheorem{remark}[theorem]{Remark}
\title{Navigating the Space of Compact CMC Hypersurfaces in Spheres, Part I}
\author{Oscar Perdomo}
\address{Central Connecticut State University}
\email{perdomoosm@ccsu.edu}
\date{\today}
\begin{document}

\maketitle

\begin{abstract}
In this paper, we describe a family of embedded hypersurfaces with constant mean curvature (CMC) in the $(n+1)$-dimensional unit sphere. In the process, we provide evidence for new CMC embedded examples. In particular, for some examples with $H=0$, we verify Yau's conjecture stating that among the embedded, non-totally umbilical minimal hypersurfaces in spheres, the Clifford hypersurfaces have the least area.
\end{abstract}

\section{Introduction}

For any positive integer \( m \), we denote by \( S^m \) the unit \( m \)-dimensional sphere in the Euclidean space \( \mathbb{R}^{m+1} \). This paper builds upon results from the following works:
\begin{itemize}
    \item \cite{CS}, where the authors establish the existence of minimal immersions \( S^l \times S^l \times S^1 \to S^{2k+1} \).
    \item \cite{HW}, where the authors show the existence of CMC immersions of \( S^l \times S^l \times S^1 \longrightarrow S^{2k+1} \).
    \item \cite{P}, in which minimal immersions \( S^k \times S^l \times S^1 \to S^{k+l+1} \) are studied and a method is developed to compute the spectra of both the stability and Laplace operators.
\end{itemize}

In this paper, we continue the study initiated in \cite{P} of CMC hypersurfaces in spheres. We present evidence that for every pair \((k,l)\) there exists a continuous one-parameter family of embedded hypersurfaces with constant mean curvature \(H\) in \(S^{n+1}\), where \(n=k+l+1\). In this family, the parameter \(H\) takes values in an interval of the form \([H_{n\ell}^{\min}, \infty)\) with \(H_{n\ell}^{\min} < 0\), and every negative value (except \(H_{n\ell}^{\min}\)) is attained twice.

More precisely, the family begins with CMC hypersurfaces whose mean curvature \(H\) is negative and close to zero. These hypersurfaces resemble Kapouleas' construction: they feature two “catenoidal necks.” In this configuration, one neck is topologically of the form \(S^k \times (-1,1)\) and the other is of the form \(S^l \times (-1,1)\). As the parameter varies, the family transitions into embeddings of \(S^k \times S^l \times S^1\) in which none of the three spherical factors have their radii  ``near'' zero. In this regime, \(H\) decreases from negative values close to zero to the minimum possible value in the family, \(H_{n\ell}^{\min}<0\); then \(H\) continues to increase, passing through \(H=0\) (the only minimal embedding in the family), and finally, as \(H\) tends to infinity, the hypersurfaces resemble the Cartesian product of an \((n-1)\)-dimensional Clifford hypersurface with a closed curve of very small length.

 There are many interesting questions regarding the volume of compact (without boundary) minimal hypersurfaces in spheres.  It is known that the totally geodesic \(S^{m-1}\) in \(S^m\) has the smallest volume among all such hypersurfaces. In other words, if we denote by \(\sigma_m\) the volume of the unit \(m\)-dimensional sphere, then every minimal non-totally geodesic compact hypersurface \(M \subset S^{n+1}\) (without boundary) has volume greater than \(\sigma_n\).

It has been conjectured by Yau (page 288 of \cite{Y}) that among all compact minimal hypersurfaces in \(S^{n+1}\) that are not totally geodesic, the second lowest volume is achieved by a minimal Clifford hypersurface. As he stated in \cite{Y}:

\begin{minipage}{0.9\textwidth}
\centering
``Does the volume of the minimal hypersurfaces given by the product of spheres give the lowest value of the volume among all non-totally geodesic minimal hypersurfaces? B. Solomon thinks this is the case.''
\end{minipage}

Yau's conjecture on the volume of compact minimal hypersurfaces has so far been verified only in the case of surfaces in \(S^3\) (see \cite{MN}). The family that we are studying in this paper includes, for low dimensions, one minimal compact embedded example in \(S^4\), one in \(S^5\), two in \(S^6\), two in \(S^7\), three in \(S^8\), three in \(S^9\), four in \(S^{10}\), four in \(S^{11}\), five in \(S^{12}\) and  five in \(S^{13}\). In Section \ref{area}, we will compute the volume of these examples and verify that Yau's conjecture holds true for them.

Since this paper focuses on families of embedded CMC hypersurfaces in spheres, we begin by describing the three easiest-to-describe families of such hypersurfaces.

\subsection{Totally Umbilical Hypersurfaces} 
For any \( k \), we define the immersion 
\[
\phi: S^k \longrightarrow S^{k+1}
\]
by
\begin{eqnarray}\label{imtu}
\phi(y) = \left(r\, y, \sqrt{1 - r^2} \right),
\end{eqnarray}
where \( r \) is a constant in \( (0,1) \) and \( y \) represents a point in \( S^k \). With respect to the Gauss map \( \nu = (-\sqrt{1 - r^2}\, y, r) \), the mean curvature of the hypersurface is given by 
\[
H = \frac{\sqrt{1 - r^2}}{r}.
\]

\begin{figure}[h]
\centerline{\includegraphics[scale=0.52]{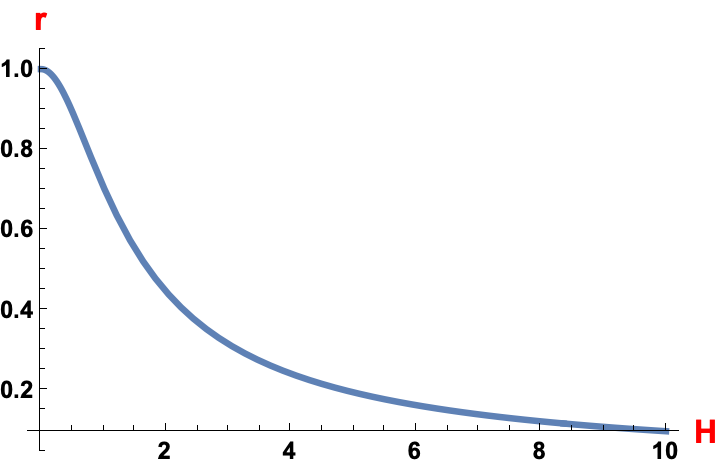}}
\caption{Each point on the curve \( r = \frac{1}{\sqrt{1+H^2}} \) represents a CMC hypersurface. Specifically, the point \( \left(H, \frac{1}{\sqrt{1+H^2}}\right) \) corresponds to the hypersurface given by the immersion (\ref{imtu}) with \( r = \frac{1}{\sqrt{1+H^2}} \).}
\label{pcHeq0neq3}
\end{figure}

\subsection{Clifford hypersurfaces} 
For any pair of positive integers \( k \) and \( l \), we define the immersion 
\[
\phi: S^k \times S^l \longrightarrow S^{k+l+1}
\]
by
\[
\phi(y,z) = \left( r\, y, \sqrt{1 - r^2}\, z \right),
\]
where \( r \) is a constant in \( (0,1) \), and \( y \) and \( z \) represent points in \( S^k \) and \( S^l \), respectively.

With respect to the Gauss map \( \nu = (-\sqrt{1 - r^2}\, y, r\, z) \), the mean curvature \( H \) satisfies
\[
(k+l) H = k \frac{\sqrt{1 - r^2}}{r} - l \frac{r}{\sqrt{1 - r^2}}.
\]

This family of hypersurfaces realizes every possible value of \( H \). As \( H \to -\infty \), the projection of the hypersurface onto the last \( l+1 \) coordinates forms an \( l \)-sphere whose radius approaches zero. Conversely, as \( H \to \infty \), the projection onto the first \( k+1 \) coordinates forms a \( k \)-sphere with a radius approaching zero.

Notably, to account for all possible hypersurfaces in this family, we must consider both positive and negative values of \( H \). Furthermore, in general, if \( k \neq l \), the hypersurface associated with \( H \) is not isometric to the one associated with \( -H \).

\subsection{Rotational hypersurfaces} 
For any positive integer \( k \), we define the immersion 
\[
\phi: S^k \times \mathbb{R} \longrightarrow S^{k+2}
\]
by
\begin{eqnarray}\label{rotEx}
\phi(y,t) = \left( \sqrt{1 - f_1(t)^2 - f_2(t)^2}\, y,\, f_1(t),\, f_2(t) \right),
\end{eqnarray}
where \( \alpha(t) = (f_1(t), f_2(t)) \) is a closed curve contained within the interior of the unit disk, called the \emph{profile curve}. Here, \( y \) represents a point in \( S^k \).

The minimal examples (\( H = 0 \)) in this family were studied by Otsuki \cite{O1, O2}, while the non-minimal examples were considered in \cite{PRe, WCL, DD, BL}. The family of embedded CMC rotational hypersurfaces can be described as follows (see \cite{PRe}):  

For any integers \( k \geq 1 \) and \( m \geq 2 \), there exist curves 
\[
\alpha_{H,k,m}(t) = (f_1(t), f_2(t)) \quad\text{with } H \text{ between } \cot\left(\frac{\pi}{m}\right) \quad\text{and}\quad \frac{\sqrt{k}(m^2 - 2)}{(k+1)\sqrt{m^2 - 1}},
\]
such that the immersion \eqref{rotEx} defines an embedded hypersurface in \( S^{k+2} \) with constant mean curvature \( H \).

Notably, when \( k = 1 \), no surface in this family exists with \( H = \frac{1}{\sqrt{3}} \). Another important observation is that for any \( m \geq 2 \), the profile curve \( \alpha_{H,k,m}(t) = (f_1(t), f_2(t)) \) closely resembles a regular \( m \)-gon with vertices on the unit circle when \( H \) is near \( \cot\left(\frac{\pi}{m}\right) \). Conversely, when \( H \) is close to the other bound,
\[
H = \frac{\sqrt{k}(m^2 - 2)}{(k+1)\sqrt{m^2 - 1}},
\]
the profile curve \( \alpha_{H,k,m}(t) \) becomes nearly circular.

\section{The new family of CMC in the sphere }

In this section, we introduce the main family of hypersurfaces that will be the focus of this paper. These hypersurfaces generalize the classical rotational hypersurfaces by incorporating an additional parameter, leading to new examples of embedded CMC hypersurfaces in spheres.

Let us consider an immersion of the form $\phi:S^k\times S^l\times \mathbb{R}\longrightarrow S^{n+1}$ with 
\begin{eqnarray}\label{param}
\phi(y,z,t)=(\sqrt{1-f_1(t)^2-f_2(t)^2}\, y,f_2(t)z,f_1(t))
\end{eqnarray}
where $f_1$ and $f_2$ are $2T$-periodic functions that satisfy $f_2(t)>0$ and $f_1(t)^2 + f_2(t)^2 < 1$. We are calling the period $2T$ instead of $T$ because we will be working more with half of the period of the profile curve. Here, $k+l+1=n$, with $y$ representing a point in $S^{k}$ and $z$ representing a point in $S^{l}$. 

In \cite{P}, the author shows that the mean curvature $H$ of the immersion $\phi$ satisfies
\begin{eqnarray}\label{ExpH}
nH=\frac{ng+\kappa_1+l\kappa_2}{h}-\frac{1}{h^3}(\kappa_1(ff^\prime)^2)
\end{eqnarray}
with
\begin{eqnarray*}
f=\sqrt{1-f_1^2-f_2^2}\, ,\quad g=f_2f_1^\prime-f_1f_2^\prime , \quad h=\sqrt{1-g^2}\\
\kappa_1= f_2^{\prime\prime}(t)f_1^\prime(t)-f_1^{\prime\prime}(t)f_2^\prime(t)
\quad\hbox{and}\quad \kappa_2=-\frac{f_1^\prime(t)}{f_2(t)}.
\end{eqnarray*}

We will call the curve $\alpha(t)=(f_1(t),f_2(t))$ the profile curve of the immersion. In this case, the mean curvature is taken with respect to the Gauss map
\begin{eqnarray}
\xi=h^{-1}\left( -gf y,(-gf_2+f_1^\prime)\, z,-gf_1-f_2^\prime\right).
\end{eqnarray}

In \cite{CS}, Carlotto and Schulz proved the existence of embedded minimal examples of the form (\ref{param}) in the case when \( k = l \). Similarly, in the same setting \( k = l \), Huang and Wei established the existence of embedded examples with constant mean curvature \cite{HW}. To demonstrate the existence of embedded examples, both papers considered as the profile curve the curve in 2-dimensional sphere,

\[
\beta = (\sqrt{1 - f_1^2 - f_2^2}, f_2, f_1) = \left(\sin(u_2)\cos(u_1), \sin(u_2)\sin(u_1),\cos(u_2)\right),
\]

parametrized by arc-length. In this case, the corresponding ODE system reduces to

\begin{eqnarray}\label{thesystemWei}
\begin{cases}
u_1^\prime &= \frac{\sin(u_3)}{\sin(u_2)}, \\
u_2^\prime &= \cos(u_3), \\
u_3^\prime &= 2k\frac{\cot(2 u_1)}{\sin(u_2)}-(2k+1)\cot(u_2)\sin(u_3)-(k+2) H.
\end{cases}
\end{eqnarray}

For the case \( k = l \), we extend the solution provided in \cite{CS} and \cite{P} to a one-parameter family that starts near \( H = 0 \) and includes both positive and negative values of \( H \). In contrast, the subfamily extending from \( H = 0 \) for positive values \( H > 0 \) was previously considered in \cite{HW}, but the case \( H < 0 \) was not addressed in their work. Our extension provides a full description of the family by incorporating both positive and negative values of \( H \).

\subsection{The ODE} 

In this subsection, we set up the system of differential equations that generates CMC hypersurfaces of the form \eqref{param}.

We can assume, without loss of generality, that the curve \( \alpha(t) = (f_1(t), f_2(t)) \) is parametrized by arc-length. That is, we assume that for some function \( \theta(t) \), the following holds:
\[
f_1^\prime = \cos\theta \quad \hbox{and} \quad f_2^\prime = \sin\theta.
\]
Substituting the identities
\[
f_1^{\prime\prime} = -\theta^\prime\sin\theta, \quad f_2^{\prime\prime} = \theta^\prime\cos\theta
\]
into Equation \eqref{ExpH}, we obtain that \( \theta^\prime = K \), where \( K \) is given in terms of \( \theta \), \( f_1 \), and \( f_2 \) by
\begin{eqnarray}\label{dth}
K=\frac{h^2}{f_2f^2}\, \left( n f_1 f_2 \sin \theta+H h n f_2-n f_2^2 \cos \theta+l \cos \theta \right). 
\end{eqnarray}

Here,
\[
g = f_2\cos\theta - f_1\sin\theta, \quad h = \sqrt{1 - g^2}.
\]

We now state the following theorem.

\begin{theorem} \label{theODE}
Let \( k \) and \( l \) be positive integers such that \( n = k + l + 1 \). If \( f_1 \) and \( f_2 \) are \( T \)-periodic functions satisfying \( f_2 > 0 \) and \( f_1^2 + f_2^2 < 1 \), and they satisfy the system
\begin{eqnarray}\label{thesystem}
\begin{cases} 
f_1^\prime &= \cos(\theta),\\
f_2^\prime &= \sin(\theta),\\
\theta^\prime &= \frac{h^2}{f_2f^2}\, \left( n f_1 f_2 \sin \theta+n H f_2 h-n f_2^2 \cos \theta+l \cos \theta \right),
\end{cases}
\end{eqnarray}
where
\[
f = \sqrt{1 - f_1^2 - f_2^2}, \quad g = f_2\cos\theta - f_1\sin\theta, \quad h = \sqrt{1 - g^2},
\]
then the immersion 
\[
\phi: S^k \times S^l \times \mathbb{R} \longrightarrow S^{n+1}, \quad \phi(y, z, t) = (f(t)\, y, f_2(t)z, f_1(t))
\]
defines a hypersurface with constant mean curvature \( H \).
\end{theorem}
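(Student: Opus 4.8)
The plan is to reduce the theorem to a purely algebraic verification sitting on top of the mean curvature formula \eqref{ExpH}, which (per \cite{P}) already expresses the pointwise mean curvature of $\phi$ for an \emph{arbitrary} profile curve $(f_1,f_2)$. Concretely, I would show that, once the arc-length conditions $f_1'=\cos\theta$ and $f_2'=\sin\theta$ are imposed, the single scalar equation asserting that the right-hand side of \eqref{ExpH} equals the prescribed constant $H$ is equivalent to the third line of \eqref{thesystem}, namely $\theta'=K$. Since \eqref{thesystem} is assumed to hold, substituting $\theta'=K$ back into \eqref{ExpH} returns the value $H$ identically in $t$, so $\phi$ has constant mean curvature $H$.

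The first concrete step is to rewrite every ingredient of \eqref{ExpH} under the arc-length parametrization. Differentiating gives $f_1''=-\theta'\sin\theta$ and $f_2''=\theta'\cos\theta$, whence
\[
\kappa_1=f_2''f_1'-f_1''f_2'=\theta'(\cos^2\theta+\sin^2\theta)=\theta',
\]
and $\kappa_2=-f_1'/f_2=-\cos\theta/f_2$. One also checks directly that $g=f_2f_1'-f_1f_2'=f_2\cos\theta-f_1\sin\theta$, matching the expression used in the statement, so that $h=\sqrt{1-g^2}$ is unchanged.

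The crux is the identity that makes the $\theta'$-terms collapse. Since $f=\sqrt{1-f_1^2-f_2^2}$, one has $ff'=-(f_1f_1'+f_2f_2')=-(f_1\cos\theta+f_2\sin\theta)$, so $(ff')^2=(f_1\cos\theta+f_2\sin\theta)^2$. Adding this to $g^2=(f_2\cos\theta-f_1\sin\theta)^2$, the cross terms cancel and the squares recombine, giving $g^2+(ff')^2=f_1^2+f_2^2$; therefore
\[
h^2-(ff')^2=(1-g^2)-(f_1^2+f_2^2-g^2)=1-f_1^2-f_2^2=f^2.
\]
Now I would collect the $\kappa_1=\theta'$ contributions in \eqref{ExpH}: the term $\kappa_1/h$ and the term $-\kappa_1(ff')^2/h^3$ combine into $\theta'\,(h^2-(ff')^2)/h^3=\theta' f^2/h^3$. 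Solving the resulting linear equation for $\theta'$, then using $\kappa_2=-\cos\theta/f_2$ together with $g=f_2\cos\theta-f_1\sin\theta$, and finally factoring out $1/f_2$, yields exactly the expression $K$ of \eqref{dth} and \eqref{thesystem}.

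I expect the only real obstacle to be bookkeeping: tracking signs correctly through the substitutions and recognizing the cancellation $g^2+(ff')^2=f_1^2+f_2^2$, which is what turns the otherwise messy coefficient of $\theta'$ into the clean factor $f^2$. There is no analytic subtlety, since \eqref{ExpH} does all of the geometric work; the hypotheses $f_2>0$ and $f_1^2+f_2^2<1$ serve only to guarantee $f^2>0$ and, via $g^2=f_1^2+f_2^2-(ff')^2\le f_1^2+f_2^2<1$, that $h>0$, so that every denominator appearing in $K$ is nonzero and the manipulation above is legitimate.
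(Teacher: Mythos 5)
Your proposal is correct and follows essentially the same route as the paper: the paper obtains the third equation of \eqref{thesystem} precisely by substituting the arc-length identities $f_1'=\cos\theta$, $f_2'=\sin\theta$, $f_1''=-\theta'\sin\theta$, $f_2''=\theta'\cos\theta$ into the mean curvature formula \eqref{ExpH} from \cite{P} and solving for $\theta'$ to get \eqref{dth}. Your write-up simply makes explicit the algebra the paper leaves implicit, in particular the cancellation $g^2+(ff')^2=f_1^2+f_2^2$ that produces the factor $f^2/h^3$ multiplying $\theta'$, and the observation that $f_2>0$, $f_1^2+f_2^2<1$ keep all denominators nonzero.
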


The following lemma will be key in finding embedded and properly immersed examples.

\begin{lemma}\label{theLemma}
If a solution of the ODE in Theorem \ref{theODE} satisfies the conditions \( \theta(0) = 0 \), \( f_2(0) = a > 0 \), and \( \theta(T) = \pi \), with \( 0 < f_2(t) < 1 \) and \( f_1(t)^2 + f_2^2(t) < 1 \) for \( 0 < t < T \), then \( f_1 \) and \( f_2 \) are \( 2T \)-periodic and satisfy the symmetry relations:
\[
f_1(T + t) = -f_1(T - t), \quad f_2(T - t) = f_2(T + t), \quad \text{for } 0 < t < T.
\]
\end{lemma}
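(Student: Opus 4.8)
The plan is to deduce both the $2T$-periodicity and the symmetry relations from a single reflection symmetry of the system in Theorem~\ref{theODE}, combined with uniqueness for the initial value problem. The key observation is that the right-hand side of the system is invariant under the involution that flips the signs of $f_1$ and $\theta$. Writing
\[
K(f_1,f_2,\theta)=\frac{h^2}{f_2f^2}\left(nf_1f_2\sin\theta+nHf_2h-nf_2^2\cos\theta+l\cos\theta\right),
\]
one checks directly that $f=\sqrt{1-f_1^2-f_2^2}$ and $g=f_2\cos\theta-f_1\sin\theta$ (hence $h$) are unchanged under $(f_1,\theta)\mapsto(-f_1,-\theta)$, and that each of the four summands in $K$ is even under this map, so $K(-f_1,f_2,-\theta)=K(f_1,f_2,\theta)$. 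Since the system also depends on $\theta$ only through $\cos\theta$ and $\sin\theta$, $\theta$ may be treated modulo $2\pi$.

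Using this, for any fixed $\tau$ I would introduce the reflected triple $\tilde f_1(t)=-f_1(2\tau-t)$, $\tilde f_2(t)=f_2(2\tau-t)$, $\tilde\theta(t)=-\theta(2\tau-t)$. A short computation shows it again solves the system with the same $H$: the time reversal supplies the extra sign so that $\tilde f_1'=\cos\tilde\theta$ and $\tilde f_2'=\sin\tilde\theta$, while $\tilde\theta'(t)=\theta'(2\tau-t)=K(\tilde f_1,\tilde f_2,\tilde\theta)(t)$ thanks to the evenness of $K$ just noted. Thus the reflected triple and the original triple are two solutions of the same autonomous system, and by Picard–Lindelöf they coincide as soon as they agree at one instant. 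Evaluating the reflected solution at $t=\tau$ gives $(-f_1(\tau),f_2(\tau),-\theta(\tau))$, which equals $(f_1(\tau),f_2(\tau),\theta(\tau))$ exactly when $f_1(\tau)=0$ and $\theta(\tau)\in\{0,\pi\}$.

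Granting the boundary facts $f_1(0)=0$ and $f_1(T)=0$, the proof then closes quickly. Applying the reflection with $\tau=T$ (where $\theta(T)=\pi$) yields $f_1(T+t)=-f_1(T-t)$, $f_2(T+t)=f_2(T-t)$, and $\theta(T+t)\equiv-\theta(T-t)\pmod{2\pi}$, which are precisely the asserted symmetries. Applying it with $\tau=0$ (where $\theta(0)=0$) gives the mirror symmetry about the origin, $f_1(-t)=-f_1(t)$, $f_2(-t)=f_2(t)$. Composing the two reflections, the maps $t\mapsto -t$ and $t\mapsto 2T-t$ compose to $t\mapsto t+2T$ while the two sign changes of $f_1$ and $\theta$ cancel, so that $f_1(t+2T)=f_1(t)$ and $f_2(t+2T)=f_2(t)$; this is the $2T$-periodicity.

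I expect the main obstacle to be precisely the boundary fact $f_1(T)=0$ — geometrically, the statement that the profile curve returns to the $f_2$-axis exactly when its unit tangent has rotated to angle $\pi$. The condition $f_1(0)=0$ is the natural normalization that the integration is started at the crossing of the $f_2$-axis, and I would take it as part of the initial data accompanying $\theta(0)=0$ and $f_2(0)=a$. The substantive point is $f_1(T)=0$: setting $p=f_1\cos\theta+f_2\sin\theta=-ff'$, one has $f_1(0)=p(0)$ and $f_1(T)=-p(T)$, so the claim is equivalent to the $S^k$-radius $f$ having a critical point at $t=T$, synchronized with the critical point of $f_2$ forced by $\theta(T)=\pi$. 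This synchronization does \emph{not} follow from the reflection symmetry alone; it must be extracted from the specific dynamics on the half-period $[0,T]$ — for instance from a first integral of the reduced planar system or from a monodromy/continuity argument — after which the reflection-plus-uniqueness machinery above finishes the proof at once.
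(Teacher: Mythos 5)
Your argument is correct and is essentially the paper's own proof: the paper sets $\tilde f_1(t)=-f_1(T-t)$, $\tilde f_2(t)=f_2(T-t)$, $\tilde\theta(t)=2\pi-\theta(T-t)$ (your reflection at $\tau=T$, with $-\theta$ written in the equivalent form $2\pi-\theta$), verifies that this triple solves \eqref{thesystem}, and concludes by uniqueness for the autonomous system, with the $2T$-periodicity following by the same composition of the reflections at $\tau=0$ and $\tau=T$ that you spell out and the paper leaves implicit. Concerning the obstacle you flag: the paper does not extract $f_1(T)=0$ from the dynamics either — its proof simply writes $\tilde f_1(0)=f_1(T)=0$, tacitly assuming $f_1(0)=0$ and $f_1(T)=0$ (both are forced by the lemma's conclusion, and in the application they are precisely the conditions imposed by solving system \eqref{system1}, where $F_1(a,H,T)=0$ and $\Theta(a,H,T)=\pi$ are treated as two independent equations), so you are right that $f_1(T)=0$ cannot be derived from the stated hypotheses; it is an omitted hypothesis of the lemma rather than a missing first-integral or monodromy argument in your proof.
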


\begin{proof}
A direct verification shows that the functions 
\[
\tilde{f}_1(t) = -f_1(T - t), \quad \tilde{f}_2(t) = f_2(T - t), \quad \tilde{\theta}(t) = 2\pi - \theta(T - t)
\]
satisfy the differential equation \eqref{thesystem} with initial conditions 
\[
\tilde{f}_1(0) = f_1(T) = 0, \quad \tilde{f}_2(0) = f_2(T), \quad \tilde{\theta}(0) = \theta(T) = \pi.
\]
Since the system is autonomous, it follows that 
\[
\tilde{f}_1(t) = -f_1(T - t) = f_1(T + t), \quad \tilde{f}_2(t) = f_2(T - t) = f_2(T + t).
\]
Thus, the functions \( f_1 \) and \( f_2 \) satisfy the stated symmetry properties, completing the proof.
\end{proof}


\subsection{Description of the new CMC hypersurfaces in the sphere.}

This paper shows numerical evidence of the following theorem. 

\begin{theorem} For any pair of positive integers $n$ and $l$ with $l<n-2$, there exists numbers

$$0<a_{n\ell}^{H_{min}}<a_{n\ell}^{H=0}<a_{n\ell}^*<1\quad\hbox{and} \quad H_{n\ell}^{\min}<0$$

and smooth functions 

$$H_{in}:(0,a_{n\ell}^*)\rightarrow [H_{n\ell}^{\min},\infty)\quad\hbox{and} \quad T_{in}:(0,a_{n\ell}^*)\rightarrow (0,\infty)\, ,$$

such that  for any $a\in (0,a_{n\ell}^*)$, the solution of the ODE system \eqref{thesystem} with $H=H_{n\ell}(a)$ that satisfy

$$f_1(0)=0,\quad f_2(0)=a\quad\hbox{and}\quad \theta(0)=0$$

defines a profile curve 

$$\alpha(t)=(f_1(t),f_2(t))$$ 

that is 
a simple closed curve with length $2 \, T_{n\ell}(a)$ and is symmetry with respect to the $f_2$-axis. See Figures \ref{Hnegp1}, \ref{Hnegp2} and \ref{Hpos}. 

Therefore the immersion \ref{param} defines an embedded hypersurface with constant mean curvature $H=H_{n\ell}(a)$ from $S^{n-l-2}\times S^l\times S^1$ into $S^{n+1}$.

Moreover we have,

\begin{enumerate}

\item

$$f_2(T_{n\ell}(a))>a_{n\ell}^*,\quad f_1(T_{n\ell}(a))=0\quad \hbox{and}\quad \theta(T_{n\ell}(a))=\pi$$

\item

 The profile curve when $a$ is near $a_{n\ell}^*$ is a small (with length $2T_{n\ell}(a)$) closed simple curve that goes around the point $(0,a_{n\ell}^*)$. See Figure \ref{Hpos}. We have that 
 
$$\lim_{a\to a_{n\ell}^*} T_{n\ell}(a)=0\quad \hbox{and}\quad \lim_{a\to a_{n\ell}^*} H_{n\ell}(a)=\infty$$
 
 \item
 
 The function $H_{n\ell}(a)$ is decreasing in the interval $(0,a_{n\ell}^{H_{\min}})$ and increasing in the interval $(a_{n\ell}^{H_{\min}},a_{n\ell}^*)$. We have that
 
 $$H_{n\ell}(a_{n\ell}^{H_{\min}})=H_{n\ell}^{\min}<0$$
 
 \item
 
We have that 

$$H_{n\ell}(a_{n\ell}^{H=0})=0$$

Therefore, the is a unique minimal embedded example in this particular family. See Figure \ref{pcHeq0neq3}.
 \item
 
 The profile curve $\alpha(t)$ converges to the vertical segment $\{(0,y):0\le y\le 1\}$ when $a$ approaches zero, see Figure \ref{Hnegp1}. We have that

 $$\lim_{a\to 0^+}H_{n\ell}(a)=0 \quad\hbox{and}\quad \lim_{a\to 0^+}T_{n\ell}(a)=1$$
 
\end{enumerate}

\end{theorem}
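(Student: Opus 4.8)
The plan is to collapse the five assertions into a single one-parameter shooting problem, solve it using the symmetry already recorded in Lemma \ref{theLemma} together with a second reflection symmetry, and then analyze the resulting functions $H_{n\ell}$ and $T_{n\ell}$ by the implicit function theorem and by matched asymptotics at the two ends $a\to0^+$ and $a\to a_{n\ell}^*$. I would first note that \eqref{thesystem} is also invariant under $(f_1,f_2,\theta)(t)\mapsto(-f_1,f_2,-\theta)(-t)$, because $g$, $h$ and $f$ are unchanged by $(f_1,\theta)\mapsto(-f_1,-\theta)$ and each term in the bracket of \eqref{dth} is as well. Hence the solution launched from $f_1(0)=0$, $f_2(0)=a$, $\theta(0)=0$ has $f_1$ odd and $f_2$ even in $t$, so its profile curve is automatically symmetric about the $f_2$-axis. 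The curve then closes into a simple symmetric loop precisely when there is a first time $T>0$ with $\theta=\pi$ and simultaneously $f_1=0$: the arc on $[0,T]$ runs from $(0,a)$ to $(0,f_2(T))$, its mirror image on $[-T,0]$ supplies the left half, and Lemma \ref{theLemma} extends this $2T$-periodically. I would therefore define, for fixed $a$, the first-return time $T(a,H)$ at which $\theta=\pi$ and the shooting function $\Phi(a,H)=f_1\big(T(a,H);a,H\big)$, and seek $H=H_{n\ell}(a)$ solving $\Phi(a,H)=0$, with $T_{n\ell}(a)=T\big(a,H_{n\ell}(a)\big)$.

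Next I would show $T(a,H)$ is well defined and that the loop is embedded. The point is that $\theta'=K>0$ along the arc for the relevant range of $(a,H)$, which one reads off from \eqref{dth}; over a full period $\theta$ then increases monotonically through $2\pi$, so the closed profile curve is convex and hence simple, giving assertion (1) and the embeddedness of the immersion. With $\theta$ monotone, $\Phi$ is smooth in $(a,H)$ by smooth dependence on parameters, and a root in $H$ for each fixed $a$ follows from an intermediate-value argument (at one extreme of $H$ the arc returns to the axis having over-turned, at the other it fails to reach it, so $f_1(T)$ changes sign). Smoothness of $H_{n\ell}$ and $T_{n\ell}$ then follows from the implicit function theorem once $\partial_H\Phi\neq0$; transversality in $H$ is the expected generic situation, to be checked by differentiating the linearized (variational) equation along the solution.

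The two endpoints I would pin down by asymptotics. As $a\to a_{n\ell}^*$ (assertion (2)) the loop shrinks: rescaling the arc by its small radius $\rho$ shows the leading curvature is $K$ evaluated at the center, and since the $nHf_2h$ term dominates, $\rho\to0$ forces $H\to\infty$ and $T\approx\int_0^\pi d\theta/K\to0$; the center $(0,a_{n\ell}^*)$ is fixed by the requirement that it not drift over a half period, and the limiting object is the Clifford piece times a vanishing circle. As $a\to0^+$ (assertion (5)) the analysis is a singular perturbation: the term $l\cos\theta/f_2$ in \eqref{dth} blows up near $t=0$, producing a boundary layer in which $\theta$ jumps from $0$ to $\approx\pi/2$ while the curve barely leaves the axis; the outer solution is the vertical segment $f_1\equiv0$, $\theta\equiv\pi/2$, $f_2'\equiv1$, which is an exact solution of \eqref{thesystem} only when $H=0$. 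Matching over arc length $\approx1$ yields $T_{n\ell}\to1$ and $H_{n\ell}\to0$, and the shrinking boundary-layer excursion gives convergence of the profile curve to $\{(0,y):0\le y\le1\}$.

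The main obstacle is assertion (3) together with the uniqueness in assertion (4): that $H_{n\ell}$ decreases and then increases with a single interior minimum $H_{n\ell}^{\min}<0$. The endpoint limits give $H_{n\ell}\to0$ as $a\to0^+$ and $H_{n\ell}\to+\infty$ as $a\to a_{n\ell}^*$, so once one knows $H_{n\ell}$ dips below $0$, continuity and the increasing branch near $a_{n\ell}^*$ force at least one zero by the intermediate value theorem (assertion (4)). But proving that there is exactly one critical point, that it is a minimum, and that the minimum value is negative would require global control of $\partial_a H_{n\ell}=-\partial_a\Phi/\partial_H\Phi$, i.e. a monotonicity or convexity property of the shooting map, which I do not expect to be available in closed form; absent a useful first integral for \eqref{thesystem}, this is precisely the step that forces the numerical continuation reported here. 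My plan would therefore be to prove the framework rigorously — the symmetry reduction, the embeddedness from monotone $\theta$, the smoothness of $H_{n\ell}$ and $T_{n\ell}$, and the two endpoint asymptotics — and to present the global shape of $H_{n\ell}$ and the sign of $H_{n\ell}^{\min}$ as the numerically verified content.
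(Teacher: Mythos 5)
Your framework is the paper's own: the paper attacks the theorem through exactly the shooting problem you set up---system \eqref{system1}, i.e.\ $F_1(a,H,T)=0$ and $\Theta(a,H,T)=\pi$, built on the symmetry of Lemma \ref{theLemma}---and continues the solution set as a curve $\Gamma$ in $(a,H,T)$-space via the implicit function theorem, checking the nondegeneracy $v=\nabla F_1(p)\times\nabla\Theta(p)\neq 0$ along the way with the variational systems \eqref{thesystemforpartialwrta} and \eqref{thesystemforpartialwrtH} (the continuation method of \cite{Pds}). Be aware, however, that the paper does not prove this theorem at all: it states explicitly that it presents \emph{numerical evidence}, defers rigorous validation (round-off--tracking methods of \cite{Pna1}, \cite{Pna2}) to a separate paper, and its only proved statement is the perturbative theorem near $H=0$ for $k=l$, which leans on the bounds of Carlotto--Schulz \cite{CS}. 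So your honest split---prove the symmetry reduction, smoothness via the implicit function theorem, and the endpoint behavior, while conceding assertion (3) and the sign of $H_{n\ell}^{\min}$ to numerics---matches the paper's actual stance, and your extra reflection symmetry $(f_1,f_2,\theta)(t)\mapsto(-f_1,f_2,-\theta)(-t)$ is correct (it is the computation of Lemma \ref{theLemma} run at $t=0$ instead of $t=T$).

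The one place where you claim a rigorous shortcut that the paper does not attempt is embeddedness via convexity, asserting that $\theta'=K>0$ ``one reads off from \eqref{dth}.'' This is a genuine gap. Evaluate \eqref{dth} along a near-vertical stretch of the profile curve, where $\theta\approx\pi/2$, $\cos\theta\approx 0$, $g\approx -f_1$, $h\approx 1$: one gets $K\approx n\,(f_1+Hh)/f^2$, whose sign is the sign of $f_1+Hh$. On the small-$a$ branch the theorem includes, $H$ is negative while the right half of the loop hugs the $f_2$-axis with $f_1$ small and positive---precisely the ``catenoidal neck'' regime of Figures \ref{Hnegp1} and \ref{Hnegp2}---so $K$ can change sign and the loops need not be convex; simplicity of the profile curve there cannot be obtained from monotone $\theta$, and in the paper it is only a numerical observation. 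Note also that convexity would not yield the $f_2(T_{n\ell}(a))>a_{n\ell}^*$ clause of assertion (1) in any case, and that your two endpoint analyses ($a\to a_{n\ell}^*$: shrinking loop, $H\to\infty$, $T\to 0$; $a\to 0^+$: boundary layer, $T\to 1$, $H\to 0$) are matched-asymptotics sketches, not proofs: in particular the uniform nondegeneracy $\partial_H\Phi\neq 0$ along the whole branch, which you would need even to know that the solution set is a graph $H=H_{n\ell}(a)$ over $a\in(0,a_{n\ell}^*)$ rather than a curve folding over the $a$-axis, is exactly the content the paper relegates to its numerical continuation.
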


\begin{remark} Table \ref{tab:asAndMinH} provide estimates for the values of $0<a_{n\ell}^{H_{min}}<a_{n\ell}^{H=0}<a_{n\ell}^*<1\quad\hbox{and} \quad H_{n\ell}^{\min}<0$ for some values of $(n,l)$. Recall that $n$ is the dimension of the manifold.
\end{remark}

\subsection{Numerical construction of the embedded CMC hypersurfaces }

Motivated by Lemma \ref{theLemma}, for fixed values of \( k \) and \( l \), we consider solutions of the ODE system \eqref{thesystem} that satisfy the initial conditions
\begin{eqnarray}\label{ic}
f_1(0) = 0, \quad f_2(0) = a, \quad \theta(0) = 0.
\end{eqnarray}

We define the functions
\begin{eqnarray}
F_1(a,H,t) = f_1(t), \quad F_2(a,H,t) = f_2(t), \quad \Theta(a,H,t) = \theta(t),
\end{eqnarray}
where \( f_1 \), \( f_2 \), and \( \theta \) satisfy the ODE system \eqref{thesystem} with initial conditions \eqref{ic}. 

Notice that the functions \( F_1 \), \( F_2 \), and \( \Theta \) are smooth and well-defined on an open subset of Euclidean space \( \{(a,H,t) \in \mathbb{R}^3\} \). According to Lemma \ref{theLemma}, we can find properly immersed CMC hypersurfaces by  solving the following system of two equations with three variables:
\begin{eqnarray}\label{system1}
\begin{cases} 
F_1(a,H,t) &= 0, \\ 
\Theta(a,H,t) &= \pi. 
\end{cases}
\end{eqnarray}

We can mathematically establish the existence of solutions to the system above by using a numerical method that tracks the round-off error. In \cite{Pna1} and \cite{Pna2}, the author provides examples of how to rigorously derive mathematical solutions from numerical approximations. This approach, however, will be addressed in a separate paper.

As expected, solving a system of two equations with three variables (\( a, H, t \)) yields a curve. To find this curve, we compute the gradients of the functions \( F_1 \) and \( \Theta \) and apply the techniques developed in \cite{Pds} to graph a solution curve \( \Gamma \) in the space \( (a, H, T) \) that satisfies system \eqref{system1}. Each point \( (a, H, T) \in \Gamma \) corresponds to an embedded hypersurface with constant mean curvature \( H \) and a profile curve of length \( 2T \). 

Figure \ref{Gammaneq3} (left) shows a portion of this curve \( \Gamma \) for the case \( n = 3 \) with \( l = k = 1 \).

For completeness, we briefly outline the procedure. Suppose we have a numerical solution \( (a_0, H_0, T_0) \) to system \eqref{system1}. If we denote \( K_{f_1} \), \( K_{f_2} \), and \( K_{\theta} \) as the partial derivatives of the function \( K \) given in Equation \eqref{dth}, then the following system of differential equations

\begin{eqnarray}\label{thesystemforpartialwrta}
\begin{cases} 
f_1^\prime &= \cos\theta,\\
f_2^\prime &= \sin\theta,\\
\theta^\prime &= K,\\
f_{1a}^\prime &= -\theta_a \sin\theta,\\
f_{2a}^\prime &= \theta_a \cos\theta,\\
\theta_a^\prime &= K_{f_1} f_{1a} + K_{f_2} f_{2a} + K_{\theta} \theta_a
\end{cases}
\end{eqnarray}

with initial conditions 
\[
f_1(0) = 0, \quad f_2(0) = a_0, \quad \theta(0) = 0, \quad f_{1a}(0) = 0, \quad f_{2a}(0) = 1, \quad \theta_a(0) = 0
\]
allows us to compute the partial derivatives of the functions \( F_1 \), \( F_2 \), and \( \Theta \) with respect to \( a \).

Similarly, the following system

\begin{eqnarray}\label{thesystemforpartialwrtH}
\begin{cases} 
f_1^\prime &= \cos\theta,\\
f_2^\prime &= \sin\theta,\\
\theta^\prime &= K,\\
f_{1H}^\prime &= -\theta_H \sin\theta,\\
f_{2H}^\prime &= \theta_H \cos\theta,\\
\theta_H^\prime &= K_{f_1} f_{1H} + K_{f_2} f_{2H} + K_{\theta} \theta_H
\end{cases}
\end{eqnarray}

with initial conditions 
\[
f_1(0) = 0, \quad f_2(0) = a_0, \quad \theta(0) = 0, \quad f_{1H}(0) = 0, \quad f_{2H}(0) = 0, \quad \theta_H(0) = 0
\]
allows us to compute the partial derivatives of \( F_1 \), \( F_2 \), and \( \Theta \) with respect to \( H \).

Once these partial derivatives are obtained, we compute the gradients of the functions \( F_1 \) and \( \Theta \) at a particular solution \( p_1 = (a_1, H_1, T_1) \) of system \eqref{system1}. We then define the vector
\[
v = \nabla F_1(p_1) \times \nabla \Theta(p_1).
\]
If \( v \neq 0 \), then by the implicit function theorem, the solution set \( \Gamma \) of system \eqref{system1} is an embedded curve, with a tangent line at \( p_1 \) parallel to \( v \). Using this observation, the numerical method searches for a solution with the desired precision near the point \( p_1 + h v \) for small \( h \).

\subsubsection{CMC Hypersurfaces in \( S^4 \)}

When \( n = 3 \), the only possible values for \( k \) and \( l \) are \( k = l = 1 \). We can verify that there exists a solution of the form \( q_0 = (0.187605..., 0, 1.15925...) \) that satisfies system \eqref{system1}. Figure \ref{pcHeq0neq3} shows the graphs of the functions \( f_1 \), \( f_2 \), and \( \theta \). 

Notice that the first entry of \( q_0 \) provides the initial condition for \( f_2 \), while the last entry gives half of the period of the functions \( f_1 \) and \( f_2 \). Additionally, \( q_0 \) satisfies system \eqref{system1} because \( f_1(1.15925...) = 0 \) and \( \theta(1.15925...) = \pi \).  

\begin{figure}[h]
\centerline{\includegraphics[scale=0.52]{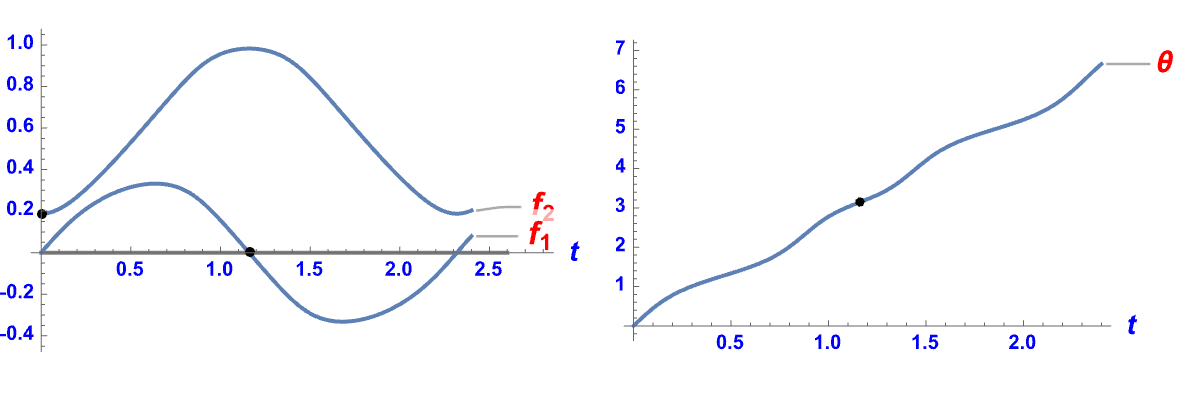} }
\caption{Graphs of the functions \( f_1 \), \( f_2 \), and \( \theta \) associated with the point \( q_0 \). Notice that there are two highlighted points in the left graph and one in the right graph. These indicate that \( f_2(0) = 0.187605... \), which is the first entry of \( q_0 \), and that \( f_1(1.15925...) = 0 \) and \( \theta(1.15925...) = \pi \).}
\label{pcHeq0neq3}
\end{figure}

Recall that the immersion is given by
\[
\phi(u,v,t) = (f(t) \cos u, f(t) \sin u, f_2(t) \cos v, f_2(t) \sin v, f_1(t)).
\]

Since the second entry of \( q_0 \) is zero, the immersion \( \phi \) defines an embedded minimal hypersurface in \( S^4 \). Figure \ref{pcHeq0neq3} shows the profile curve for the immersion. We note that since \( t \) is the arc-length parameter, the length of this profile curve is approximately
\[
2 \times 1.15925... \approx 2.31851.
\]

\begin{figure}[h]
\centerline{\includegraphics[scale=0.52]{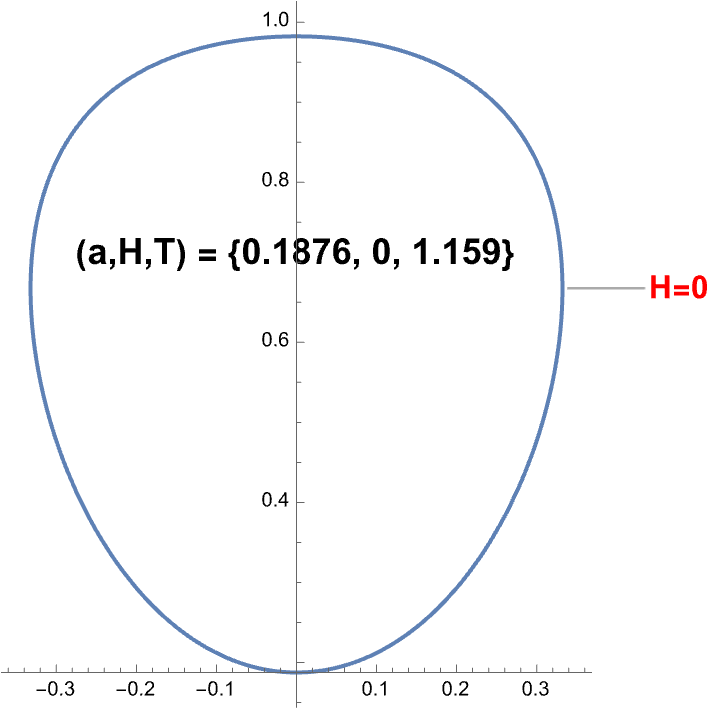} }
\caption{Profile curve of a minimal hypersurface in \( S^4 \).}
\label{pcHeq0neq3}
\end{figure}

This solution was shown to exist mathematically by Carlotto and Schulz in \cite{CS}. They referred to this example as the \emph{hypertorus} and conjectured that it is the \emph{only} embedded minimal example in \( S^4 \) that is topologically equivalent to \( S^1 \times S^1 \times S^1 \).

Using the ODE systems \eqref{thesystemforpartialwrta} and \eqref{thesystemforpartialwrtH}, we compute

\[
\nabla F_1(q_0) \approx (0.966592, -0.883772, -1), \quad \nabla \Theta(q_0) \approx (-0.287382, 0.505903, 1.92866),
\]
and
\[
v = \nabla F_1(q_0) \times \nabla \Theta(q_0) \approx (-1.19859, -1.57684, 0.235021).
\]

Notice that the numerical computations above allow us to prove the following result, which is similar in spirit—though different in technique—to the 1990 result of Brito and Leite \cite{BL}, where they established the existence of rotational embedded CMC hypersurfaces for small values of \( H \).

\begin{theorem}
There exists a positive \( \epsilon \) such that for any \( H \) with \( |H| < \epsilon \), there exists an embedded immersion of \( S^1 \times S^1 \times S^1 \) into \( S^4 \) with constant mean curvature \( H \).
\end{theorem}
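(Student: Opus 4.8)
The plan is to apply the implicit function theorem to the system \eqref{system1} at the explicit minimal solution $q_0 = (0.187605\ldots, 0, 1.15925\ldots)$, using the gradients computed just above the statement. The strategy rests on the observation that $q_0$ is a solution with $H=0$, and that the solution set $\Gamma$ of \eqref{system1} is (locally) a smooth curve whose tangent direction at $q_0$ is $v = \nabla F_1(q_0) \times \nabla \Theta(q_0)$. First I would note that $v \approx (-1.19859, -1.57684, 0.235021) \neq 0$, which already guarantees (via the implicit function theorem) that $\Gamma$ is a one-dimensional embedded curve near $q_0$. The key point is that the $H$-component of $v$, namely $0.235021$, is \emph{nonzero}: this means that as one moves along $\Gamma$ away from $q_0$, the value of $H$ changes to first order. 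Consequently $H$, restricted to $\Gamma$ and viewed as a smooth function of arc-length along the curve, has nonvanishing derivative at $q_0$, so it is a local diffeomorphism from a neighborhood of $q_0$ in $\Gamma$ onto an open interval containing $H=0$.

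Concretely, I would parametrize $\Gamma$ near $q_0$ as $s \mapsto (a(s), H(s), T(s))$ with $(a(0),H(0),T(0)) = q_0$ and tangent vector $v$ at $s=0$, so that $H'(0)$ equals (a nonzero multiple of) the third component $0.235021$ of $v$. Since $H'(0) \neq 0$, there is some $\epsilon > 0$ such that every value $H$ with $|H| < \epsilon$ is attained as $H(s)$ for a unique small $s$. For each such $s$, the triple $(a(s), H(s), T(s))$ solves \eqref{system1}, meaning $F_1(a(s), H(s), T(s)) = 0$ and $\Theta(a(s), H(s), T(s)) = \pi$. By Lemma \ref{theLemma}, with $T = T(s)$ playing the role of the half-period, the corresponding functions $f_1, f_2$ are $2T$-periodic and satisfy the symmetry relations, so the profile curve $\alpha(t) = (f_1(t), f_2(t))$ closes up. The immersion \eqref{param}, equivalently the explicit $\phi(u,v,t) = (f\cos u, f \sin u, f_2 \cos v, f_2 \sin v, f_1)$, then descends to an immersion of $S^1 \times S^1 \times S^1$ into $S^4$ with constant mean curvature $H$.

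To upgrade ``immersion'' to ``embedded,'' I would argue that the profile curve remains simple and the $f_2 > 0$, $f_1^2 + f_2^2 < 1$ conditions persist under small perturbation. At $q_0$ the profile curve is simple (it is the Carlotto--Schulz hypertorus) and stays strictly inside the open unit disk with $f_2$ bounded away from zero on the relevant interval; these are open conditions, so by continuous dependence of solutions of \eqref{thesystem} on the parameters $(a, H)$ and on $t$ over the compact interval $[0, T(s)]$, they continue to hold for all $s$ sufficiently small, hence for all $|H| < \epsilon$ after possibly shrinking $\epsilon$. Embeddedness of $\phi$ then follows because the profile curve being simple and interior, together with the symmetry, forces the rotational construction to be injective (distinct product factors $S^1 \times S^1$ sit over distinct profile points, and the profile itself does not self-intersect).

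The main obstacle I anticipate is the rigor of the numerical input. The argument is only as sound as the claim that $q_0$ is genuinely a solution of \eqref{system1} and that the computed gradients are accurate enough to conclude $v \neq 0$ with nonzero third component; a crude numerical value could in principle mask a degeneracy. This is precisely the gap flagged earlier in the excerpt, where the author notes that solutions can be rigorously certified using the round-off-tracking methods of \cite{Pna1} and \cite{Pna2}. For the purposes of this theorem, however, the qualitative conclusion is robust: it requires only that the third component of $v$ be bounded away from zero, and the computed value $0.235021$ is comfortably far from zero, so even generous error bars leave the sign and nonvanishing intact. The secondary (but routine) point to verify carefully is that the openness arguments for simplicity and embeddedness hold uniformly on the compact $t$-interval, which follows from standard continuous dependence for ODEs.
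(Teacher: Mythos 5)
Your strategy is the same as the paper's: apply the implicit function theorem at the minimal solution \( q_0 \), using the nonvanishing of \( v = \nabla F_1(q_0) \times \nabla \Theta(q_0) \). You are in fact more explicit than the paper on the one point where care is needed: \( v \neq 0 \) alone only makes the solution set \( \Gamma \) of \eqref{system1} a smooth curve near \( q_0 \); to conclude that \emph{every} \( H \) near \( 0 \) is realized, you need \( H \) to vary with nonzero speed along \( \Gamma \), i.e.\ you need the \( H \)-component of \( v \) (equivalently \( \det \partial(F_1,\Theta)/\partial(a,T) \neq 0 \)) rather than just \( v \neq 0 \). However, you misidentify that component. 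The coordinates are ordered \( (a,H,T) \): in \( q_0 = (0.187605\ldots, 0, 1.15925\ldots) \) the \emph{second} entry is \( H \) and the \emph{third} is the half-period \( T \) (consistently, the third entry of \( \nabla F_1(q_0) \) is \( -1 = \cos\theta(T) = \partial F_1/\partial T \)). So the \( H \)-component of \( v \) is \( -1.57684 \), while \( 0.235021 \) is the \( T \)-component. Your conclusion survives because \( -1.57684 \neq 0 \) as well, but the sentence ``\( H'(0) \) equals a nonzero multiple of the third component \( 0.235021 \)'' is wrong as written, and the argument would have silently failed in a situation where the second component vanished and the third did not: in that case \( \Gamma \) would be tangent to (or contained in) the plane \( H = 0 \), and no interval of mean curvatures would be swept out.

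The other substantive difference is how the hypothesis of the implicit function theorem is certified. You rely on the raw numerical values of the gradients and argue robustness, flagging this yourself as the weak point. The paper instead anchors the argument to the rigorous existence result of Carlotto and Schulz: it takes from \cite{CS} mathematically certified bounds on the values of \( a \) and \( T \) solving \eqref{system1} with \( H=0 \), and asserts that \( \nabla F_1 \times \nabla \Theta \neq 0 \) can be shown to hold throughout those bounds, so no uncertified floating-point input enters the proof. Your added discussion of why embeddedness persists (the profile curve remaining simple and the constraints \( f_2 > 0 \), \( f_1^2 + f_2^2 < 1 \) being open conditions over a compact parameter interval) is correct and is a point the paper's proof leaves implicit.
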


\begin{proof} 
From the work of Carlotto and Schulz \cite{CS}, we obtain bounds on the values of \( a \) and \( T \) that satisfy system \eqref{system1} for \( H = 0 \). We can show that if \( a \) and \( T \) remain within these bounds, then \( \nabla F_1 \times \nabla \Theta \) is not the zero vector. Therefore, by the implicit function theorem, system \eqref{system1} admits a solution for values of \( (a, H, T) \) near \( q_0 \).
\end{proof}

The theorem above can be extended whenever \( k = l \), since Carlotto and Schulz have mathematically established the existence of embedded examples with \( H = 0 \).

\subsubsection{Describing the profile curve for the generalize rotational  embedded CMC in $S^4$}

We can use analytic continuation, as in \cite{Pds}, to extend the point \( q_0 \) into a curve \( \Gamma \subset \{(a,H,T) : a,H,T \in \mathbb{R} \} \) such that every point in \( \Gamma \) satisfies system \eqref{system1}. Figure \ref{Gammaneq3} illustrates this curve \( \Gamma \). Every point \( (a,H,T) \in \Gamma \) represents an embedded example.

We observe that for \textit{positive values of} \( H \), as \( H \) increases, the length of the profile curve decreases and eventually converges to zero. Additionally, the negative values of \( H \) are bounded below by a critical value \( H_{31}^{\text{min}} < 0 \). Interestingly, every  \textit{negative value of} \( H \), except \( H_{31}^{\text{min}} \), corresponds to two distinct initial conditions \( a \), leading to two different embedded examples with the same constant mean curvature.

Let us describe further this family of CMC hypersurfaces. Starting at the hypertorus which correspond to the point $q_0$ in $\Gamma$ we have two directions along $\Gamma$. For one of this directions, the values of $a$ star to decrease as well as the values of $H$. $H$ decreases until it reaches a value $H_{31}^{\text{min}}= -0.0798\dots$, for this minimum value of $H$, the value of $a$ has decreased from $a=a_{31}^{H=0}=0.1876\dots$  to $a=a_{31}^{H_{31}^{\min}}=0.0747\dots$. See Figure \ref{Hnegp1}.

Continuing along the same direction of the curve $\Gamma$, the values of $a$ continues to approach zero while the mean curvature $H$ remains negative and also approaches zero. Meanwhile, as $H$ approaches zero, the profile curve approaches a curve close to two vertical lines near the $f_2$-axis. See Figure \ref{Hnegp2}.

When we move in the other direction of $\Gamma$, the value of $a$ increases as well as the values for $H$. Numerical evidence indicates that there exists a value $a_{3,1}^{*}$ such that for every positive $H$ there exists a point in $\Gamma$ with $a$ between $a_{31}^{H=0}$ and $a_{31}^*$ that produces an embedded example with CMC $H$,  see Figure \ref{Hpos}. Recall that the case $k=l$ and $H>0$ was considered by Huang and Wei in \cite{HW}.

\begin{figure}[H]
\centerline{
\includegraphics[scale=0.62]{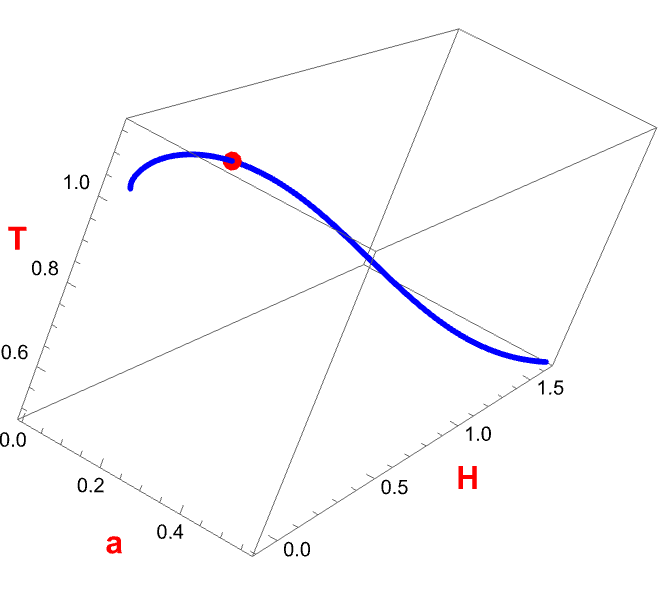} \hskip0.2cm 
\includegraphics[scale=0.62]{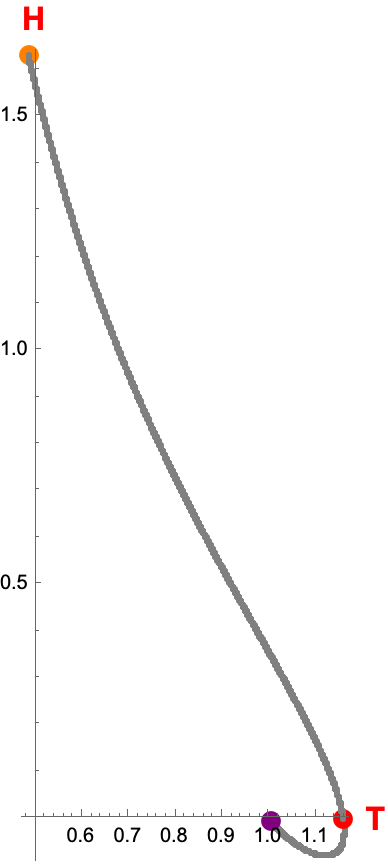}
}
\caption{On the left, we depict the curve \( \Gamma \), consisting of points that solve system \eqref{system1} for \( n=3 \). On the right, we show the projection of \( \Gamma \) onto the \( T \)-\( H \) plane.}
\label{Gammaneq3}
\end{figure}

\section{The Volume some of the minimal examples}\label{area} 

In this section we will deduce a formula for the volume of the hypersurfaces that we are considering. Recall the these hypersurfaces are of the form

\[
\phi: S^k \times S^l \times \mathbb{R} \longrightarrow S^{n+1}, \quad \phi(y, z, t) = (f(t)\, y, f_2(t)z, f_1(t))
\]

with $f=\sqrt{1-f_1^2-f_2^2}$ and $(f_1(t),f_2(t))$ parametrize by arc length. We have that 

$$|\phi_t|=\sqrt{1+\left(\frac{df}{dt}\right)^2}$$

and the element of volume $\text{dVol}$ is given by

$$\text{dVol}=f_2^l*\text{ dVol}_{S^l}*f^k* \text{dVol}_{S^k}*\sqrt{1+\left(\frac{df}{dt}\right)^2}$$

Since the profile curve has period $2T$ and is symmetric with respect the the vertical axis, we obtain that 

$$\text{Volume} = 2 \sigma_k \sigma_l \, \int_0^T f_2^l\, f^k \, \sqrt{1+\left(\frac{df}{dt}\right)^2}\, dt$$

Since 

$$\frac{df}{dt}=-\frac{1}{f}\left(f_1\cos\theta+f_2\sin\theta\right)\, ,$$

then the formula for the volume simplifies to

$$\text{Volume} = 2 \sigma_k \sigma_l \, \int_0^T f_2^l\, f^{k-1} \, \sqrt{1-f_2^2\cos^2\theta-f_2^2\cos^2\theta+2 f_1f_1\cos\theta\sin\theta}\, dt$$

\begin{definition} Let as denote by $\text{Vol }(n,l)$ the $n$-volume of the minimal hypersurface in the one-parametric family studied in this paper  topologically equivalent to $S^{n-l-1}\times S^l\times S^1$. Let us denote by $\text{VolC }(n,l)$ the $n$-volume of the minimal Clifford hypersurface topologically equivalent to $S^{n-l}\times S^l$. Notice that 

$$\text{VolC }(n,l)=\left(\sqrt{\frac{n-l}{n}}\right)^{n-l}*\sigma_{n-1}*\left(\sqrt{\frac{l}{n}}\right)^{l}*\sigma_l$$

\end{definition}

\subsection{Case $n=3$} For $n=3$, this is for hypersurfaces in $S^4$, we have that

 $$\text{VolC }(3,1)=\frac{16 \pi ^2}{3 \sqrt{3}}\approx 30.3905$$
while

 $$\text{Vol }(3,1)\approx 37.8540$$ 

\subsection{Case $n=4$} For $n=4$, this is for hypersurfaces in $S^5$, we have that

$$\text{VolC }(4,1)=\frac{3 \sqrt{3} \pi ^3}{4}\approx 40.2783\quad\hbox{and}\quad \text{VolC }(4,2)= 4 \pi^2\approx 39.4784$$

while  

$$\text{Vol }(4,1)\approx 49.4826$$

\subsection{Case $n=5$} For $n=5$, this is for hypersurfaces in $S^6$, we have that

$$\text{VolC }(5,1)=\frac{256 \pi ^3}{75 \sqrt{5}}\approx 47.3307\quad\hbox{and}\quad \text{VolC }(5,2)= \frac{48}{25} \sqrt{\frac{3}{5}} \pi ^3 \approx 46.1133$$

while  

$$\text{Vol }(5,1)\approx 57.8986\quad\hbox{and}\quad \text{Vol }(5,2)\approx 56.9862$$ 

\subsection{Case $n=6$} For $n=6$, this is for hypersurfaces in $S^7$, we have that

$$
\text{VolC }(6,1)=\frac{2\pi^4\,5^{5/2}}{216}\approx 50.4198\quad\hbox{and}\quad \text{VolC }(6,2)=\frac{128 \pi ^3}{81} \approx 48.9976,
$$

while

$$
\text{Vol }(6,1)\approx 61.5653\quad\hbox{and}\quad \text{Vol }(6,2)\approx 60.2932.
$$

\subsection{Case $n=7$} For $n=7$, this is for hypersurfaces in $S^8$, we have that

$$
\begin{aligned}
\text{VolC }(7,1)&=\frac{2304\,\pi^4}{1715\sqrt{7}}\approx 49.4617,\\[1mm]
\text{VolC }(7,2)&=\frac{200}{343}\sqrt{\frac{5}{7}}\,\pi^4\approx 48.0033,\\[1mm]
\text{VolC }(7,3)&=\frac{256}{343}\sqrt{\frac{3}{7}}\,\pi^4\approx 47.5945,
\end{aligned}
$$

while

$$
\begin{aligned}
\text{Vol }(7,1)&\approx 60.3392,\\[1mm]
\text{Vol }(7,2)&\approx 58.9648,\\[1mm]
\text{Vol }(7,3)&\approx 58.6727.
\end{aligned}
$$

\subsection{Case $n=8$} For $n=8$, this is for hypersurfaces in $S^9$, we have that

\[
\begin{aligned}
\text{VolC }(8,1)&=\frac{343\sqrt{7}\,\pi^5}{6144}\approx 45.2003,\\[1mm]
\text{VolC }(8,2)&=\frac{9\,\pi^4}{20}\approx 43.8341,\\[1mm]
\text{VolC }(8,3)&=\frac{75\sqrt{15}\,\pi^5}{2048}\approx 43.4037,\\[1mm]
\text{VolC }(8,4)&=\frac{4\,\pi^4}{9}\approx 43.2929.
\end{aligned}
\]

while

\[
\begin{aligned}
\text{Vol }(8,1)&\approx 55.1111,\\[1mm]
\text{Vol }(8,2)&\approx 53.7953,\\[1mm]
\text{Vol }(8,3)&\approx 53.4141.
\end{aligned}
\]

\subsection{Case $n=9$} For $n=9$, this is for hypersurfaces in $S^{10}$, we have that

$$
\begin{aligned}
\text{VolC }(9,1)&=\frac{262144\,\pi^5}{2066715}\approx 38.8158,\\[1mm]
\text{VolC }(9,2)&=\frac{2744\sqrt{7}\,\pi^5}{59049}\approx 37.6244,\\[1mm]
\text{VolC }(9,3)&=\frac{256\,\pi^5}{1215\sqrt{3}}\approx 37.2265,\\[1mm]
\text{VolC }(9,4)&=\frac{3200\sqrt{5}\,\pi^5}{59049}\approx 37.0827.
\end{aligned}
$$

while

$$
\begin{aligned}
\text{Vol }(9,1)&\approx 47.3107,\\[1mm]
\text{Vol }(9,2)&\approx 46.1509,\\[1mm]
\text{Vol }(9,3)&\approx 45.7730,\\[1mm]
\text{Vol }(9,4)&\approx 45.6751.
\end{aligned}
$$

\subsection{Case $n=10$} For $n=10$, this is for hypersurfaces in $S^{11}$, we have that

$$
\begin{aligned}
\text{VolC }(10,1)&=\frac{6561\,\pi^6}{200000}\approx 31.5384,\\[1mm]
\text{VolC }(10,2)&=\frac{32768\,\pi^5}{328125}\approx 30.5605,\\[1mm]
\text{VolC }(10,3)&=\frac{343\sqrt{21}\,\pi^6}{50000}\approx 30.2227,\\[1mm]
\text{VolC }(10,4)&=\frac{1536\,\pi^5}{15625}\approx 30.0830,\\[1mm]
\text{VolC }(10,5)&=\frac{\pi^6}{32}\approx 30.0434.
\end{aligned}
$$

while

$$
\begin{aligned}
\text{Vol }(10,1)&\approx 38.4320,\\[1mm]
\text{Vol }(10,2)&\approx 37.4742,\\[1mm]
\text{Vol }(10,3)&\approx 37.1433,\\[1mm]
\text{Vol }(10,4)&\approx 37.0227.
\end{aligned}
$$

\subsection{Case $n=11$} For $n=11$, this is for hypersurfaces in $S^{12}$, we have that

$$
\begin{aligned}
\text{VolC }(11,1)&=\frac{2560000\,\pi^6}{30438639\sqrt{11}}\approx 24.3791,\\[1mm]
\text{VolC }(11,2)&=\frac{13122\,\pi^6}{161051\sqrt{11}}\approx 23.6178,\\[1mm]
\text{VolC }(11,3)&=\frac{262144\sqrt{\frac{3}{11}}\,\pi^6}{5636785}\approx 23.3492,\\[1mm]
\text{VolC }(11,4)&=\frac{43904\sqrt{\frac{7}{11}}\,\pi^6}{1449459}\approx 23.23,\\[1mm]
\text{VolC }(11,5)&=\frac{5760\sqrt{\frac{5}{11}}\,\pi^6}{161051}\approx 23.1818.
\end{aligned}
$$

while

$$
\begin{aligned}
\text{Vol }(11,1)&\approx 29.7032,\\[1mm]
\text{Vol }(11,2)&\approx 28.9549,\\[1mm]
\text{Vol }(11,3)&\approx 28.6874,\\[1mm]
\text{Vol }(11,4)&\approx 28.5757,\\[1mm]
\text{Vol }(11,5)&\approx 28.5441.
\end{aligned}
$$

\subsection{Case $n=12$} For $n=12$, this is for hypersurfaces in $S^{13}$, we have that

$$
\begin{aligned}
\text{VolC }(12,1)&=\frac{161051\sqrt{11}\,\pi^7}{89579520}\approx 18.0094,\\[1mm]
\text{VolC }(12,2)&=\frac{2500\,\pi^6}{137781}\approx 17.4442,\\[1mm]
\text{VolC }(12,3)&=\frac{27\sqrt{3}\,\pi^7}{8192}\approx 17.2418,\\[1mm]
\text{VolC }(12,4)&=\frac{4096\,\pi^6}{229635}\approx 17.1483,\\[1mm]
\text{VolC }(12,5)&=\frac{8575\sqrt{35}\,\pi^7}{8957952}\approx 17.1044,\\[1mm]
\text{VolC }(12,6)&=\frac{4\,\pi^6}{225}\approx 17.0914.
\end{aligned}
$$

while

$$
\begin{aligned}
\text{Vol }(12,1)&\approx 21.9400,\\[1mm]
\text{Vol }(12,2)&\approx 21.3830,\\[1mm]
\text{Vol }(12,3)&\approx 21.1796,\\[1mm]
\text{Vol }(12,4)&\approx 21.0886,\\[1mm]
\text{Vol }(12,5)&\approx 21.0516.
\end{aligned}
$$

\newpage

\section{Graphs}

The images in this page illustrate shows the profile curve of some embedded examples with  values of $a$ between  $a_{31}^*=0.0074\dots$ 
and  $a^{H=0}_{31}=0.1876\dots$.

\begin{figure}[H]
    \centering
    \includepdf[pages=-, scale=0.8, pagecommand={}, offset=0 -30]{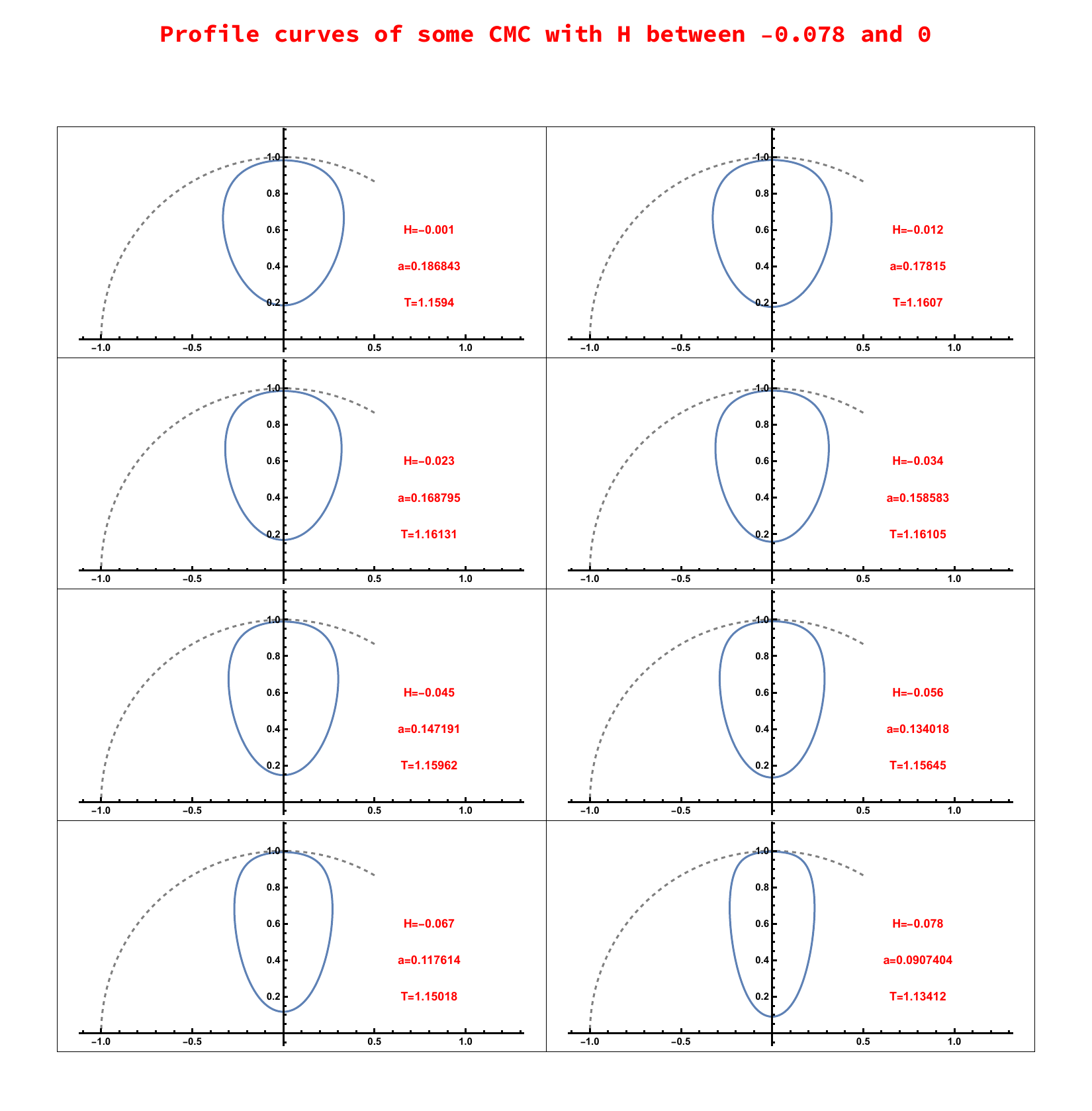}
    \caption{Profile curves with $a\in (a_{31}^{H_{31}^{\min}},a^{H=0}_{31})$.}\label{Hnegp1}
   
\end{figure}
\newpage

The images in this page illustrate shows the profile curve of some embedded examples with  values of $a$ between  to $0$ and $a_{31}^*=0.0074\dots$  
 
\begin{figure}[H]
    \centering
    \includepdf[pages=-, scale=0.8, pagecommand={}, offset=0 -30]{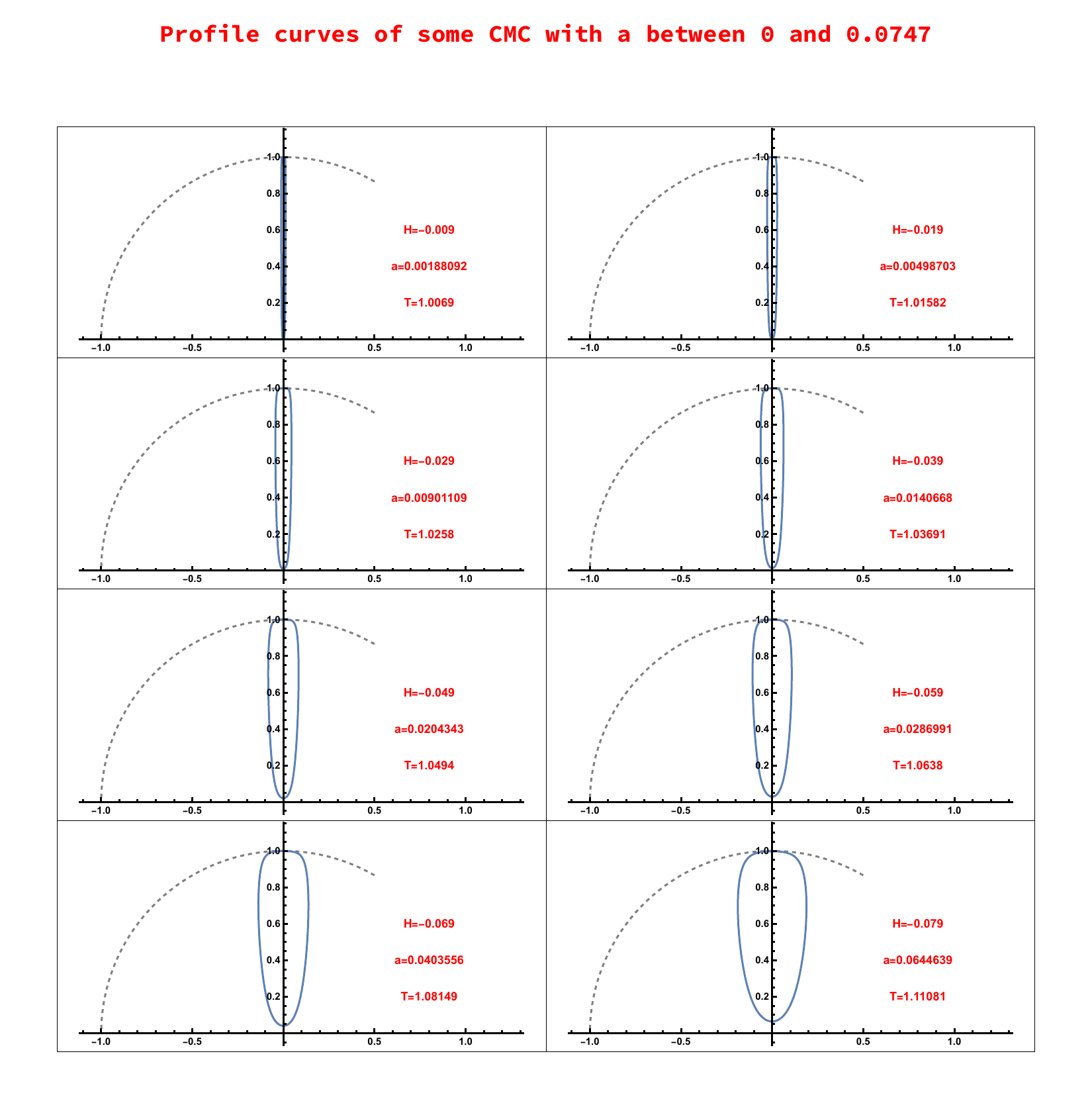}
    \caption{Profile curves with $a\in (0,a_{31}^*)$}\label{Hnegp2}
\end{figure}

\newpage

\begin{figure}[H]
    \centering
    \includepdf[pages=-, scale=0.8, pagecommand={}, offset=0 -30]{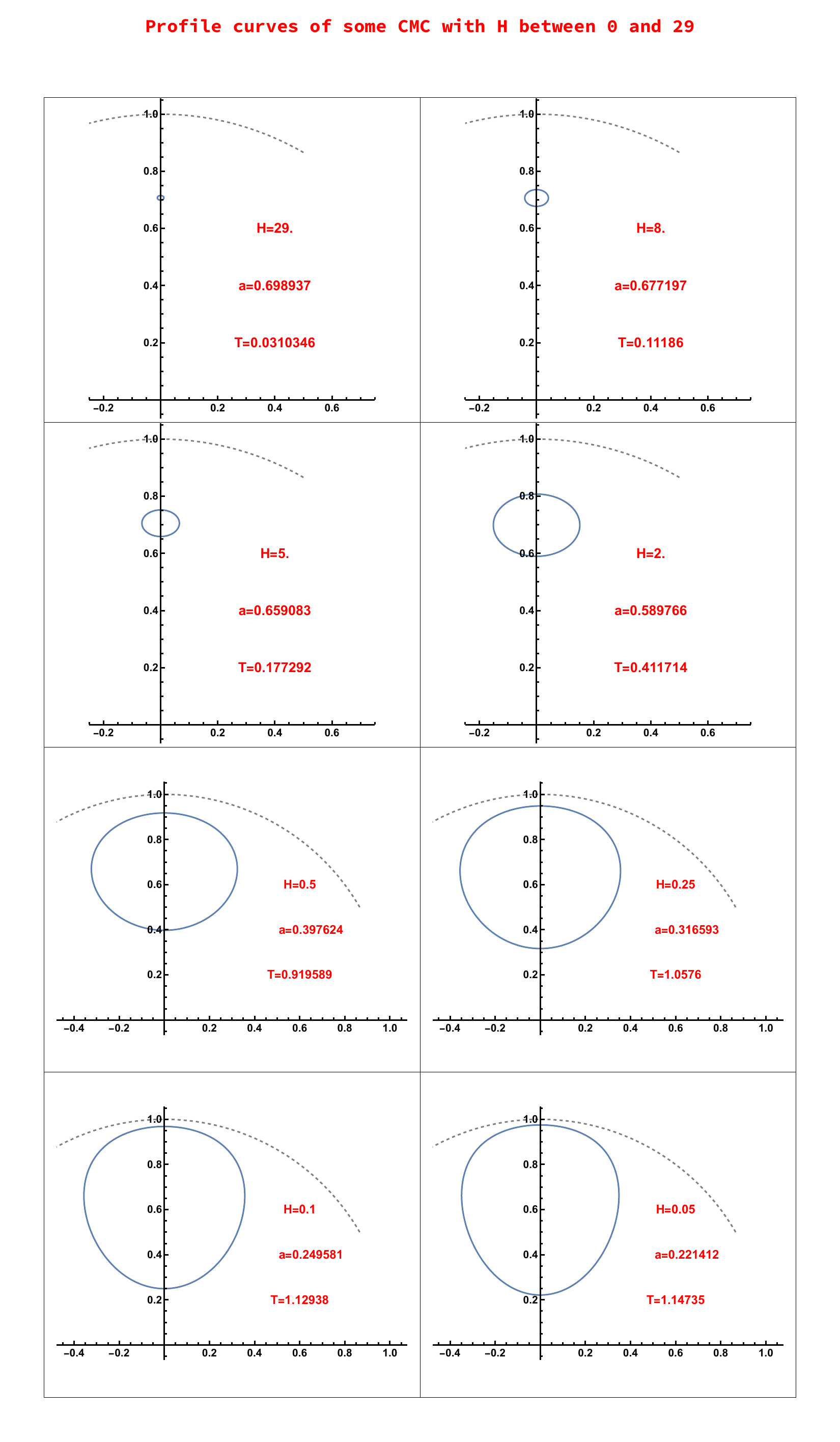}
    \caption{Profile curves with $a\in ( a_{31}^{H=0}, a_{31}^*)$}
  \label{Hpos}
\end{figure}

\newpage

{\smaller
\begin{table}[H]
\renewcommand{\arraystretch}{1.5} 
\centering
\begin{tabular}{|c|c|c|c|c|}
\hline
$(n,\ell)$ & $a_{n\ell}^{H_{\min}}$ & $a_{n\ell}^{H=0}$ & $ a_{n\ell}^* \in(A,B)$ & $ H^{\min}_{n\ell}$ \\
\hline
 \text{(3,1)} & 0.07488 & 0.1876 & (0.69893,0.71518) & -0.07989 \\
 \hline
 \text{(4,1)} & 0.06705 & 0.16853 & (0.56352,0.59113) & -0.09905 \\
 \hline
 \text{(5,1)} & 0.06029 & 0.14971 & (0.48862,0.51141) & -0.09866 \\
 \hline
 \text{(5,2)} & 0.17229 & 0.33098 & (0.6973,0.71677) & -0.11987 \\
 \hline
 \text{(6,1)} & 0.05507 & 0.13538 & (0.43752,0.45696) & -0.0946 \\
 \hline
 \text{(6,2)} & 0.15519 & 0.29683 & (0.62353,0.64131) & -0.12072 \\
 \hline
 \text{(7,1)} & 0.05095 & 0.12431 & (0.3999,0.41665) & -0.09 \\
 \hline
 \text{(7,2)} & 0.14246 & 0.2708 & (0.76683,0.78221) & -0.11714 \\
 \hline
 \text{(7,3)} & 0.23697 & 0.39953 & (0.69991,0.71422) & -0.12333 \\
 \hline
 \text{(8,1)} & 0.04765 & 0.1155 & (0.37071,0.38527) & -0.08561 \\
 \hline
 \text{(8,2)} & 0.13228 & 0.2504 & (0.52737,0.54167) & -0.11253 \\
 \hline
 \text{(8,3)} & 0.21876 & 0.36791 & (0.42153,0.43128.) & -0.12103 \\
 \hline
 \text{(9,1)} & 0.04489 & 0.10829 & (0.6973,0.71677) & -0.08162 \\
 \hline
 \text{(9,2)} & 0.12417 & 0.23392 & (0.49359,0.50642) & -0.1079 \\
 \hline
 \text{(9,3)} & 0.20423 & 0.34261 & (0.60624,0.61847) & -0.11732 \\
 \hline
 \text{(9,4)} & 0.28359 & 0.44143 & (0.70134,0.71282) & -0.11986 \\
 \hline
 \text{(10,1)} & 0.04255 & 0.10226 & (0.32756,0.33914) & -0.07804 \\
 \hline
 \text{(10,2)} & 0.1173 & 0.22029 & (0.46564,0.47718) & -0.10353 \\
 \hline
 \text{(10,3)} & 0.19238 & 0.32184 & (0.57166,0.58302) & -0.11329 \\
 \hline
 \text{(10,4)} & 0.26605 & 0.41362 & (0.6614,0.67189) & -0.11703 \\
 \hline
 \text{(11,1)} & 0.04049 & 0.09713 & (0.31105,0.32143) & -0.07484 \\
 \hline
 \text{(11,2)} & 0.1116 & 0.20877 & (0.44193,0.45251) & -0.09952 \\
 \hline
 \text{(11,3)} & 0.18225 & 0.30441 & (0.54243,0.55301) & -0.10933 \\
 \hline
 \text{(11,4)} & 0.25137 & 0.39044 & (0.6275,0.63738) & -0.11367 \\
 \hline
 \text{(11,5)} & 0.31936 & 0.47029 & (0.70237,0.7118) & -0.11494 \\
 \hline
 \text{(12,1)} & 0.03879 & 0.09269 & (0.29583,0.30724) & -0.07197 \\
 \hline
 \text{(12,2)} & 0.10659 & 0.19888 & (0.42153,0.43128) & -0.09586 \\
 \hline
 \text{(12,3)} & 0.17372 & 0.28953 & (0.51736,0.5271) & -0.10559 \\
 \hline
 \text{(12,4)} & 0.23893 & 0.37077 & (0.59835,0.60767) & -0.11023 \\
 \hline
 \text{(12,5)} & 0.30276 & 0.44584 & (0.66868,0.67968) & -0.11218 \\
 \hline
\end{tabular}
\vskip.2cm
\caption{Relevant values for $a$ and the lowest possible CMC $H$.}
\label{tab:asAndMinH}
\end{table}
}

%
%

 \end{document}